\documentclass[11pt, a4paper]{amsart}
\usepackage[margin=3cm]{geometry}
\usepackage{amsthm}
\usepackage[colorlinks=true,citecolor=cyan]{hyperref}
\usepackage{amsmath}
\usepackage{amsfonts}
\usepackage{amssymb}
\usepackage{stmaryrd}  
\usepackage{mathrsfs} 
\usepackage{esint} 
\usepackage{array} 
\usepackage{latexsym}
\usepackage{lmodern} 
\usepackage{graphicx} 
\usepackage{graphics} 
\usepackage{epsfig} 
\usepackage{color} 
\usepackage[dvipsnames]{xcolor}
\usepackage[all]{xy}
\usepackage[new]{old-arrows}
\usepackage{textcomp}
\usepackage{stmaryrd}
\DeclareGraphicsExtensions{.eps,.pdf,.jpeg,.png} 
\usepackage{fancyhdr}
\usepackage{multirow} 
\usepackage{comment}
\usepackage{pgf,tikz}
\usepackage{paralist}
\usepackage{faktor}
\usepackage{dirtytalk}
\usepackage{url}
\usepackage{blkarray,bigstrut}
\usepackage{nicematrix}
\setcounter{tocdepth}{4}
\usepackage{enumerate}

\makeatletter
\renewcommand*\env@matrix[1][*\c@MaxMatrixCols c]{%
  \hskip -\arraycolsep
  \let\@ifnextchar\new@ifnextchar
  \array{#1}}
\makeatother

\newtheorem{proposition}{Proposition}[section]
\newtheorem{theorem}{Theorem}[section]
\newtheorem{lemma}{Lemma}[section]

\theoremstyle{definition}
\newtheorem{definition}{Definition}[section]
\newtheorem{remark}{Remark}[section]
\newtheorem{example}{Example}[section]

\title{Affine twisted length function}

\author{Nathan Chapelier-Laget}

\begin{document}

\maketitle

\begin{abstract}
Let $W_a$ be an affine Weyl group. In 1987 Jian Yi Shi gave a characterization of the elements  $w \in W_a$ in terms of $\Phi^+$-tuples $(k(w,\alpha))_{\alpha \in \Phi^+}$ called the Shi vectors. Using these coefficients, a formula is provided to compute the standard length of $W_a$. In this note we express the twisted affine length function of $W_a$ in terms of the Shi coefficients.
\end{abstract}

\section{Introduction}

Each Coxeter group $W$ has a natural generating set $S$: the one used in its presentation $(W,S)$. This set allows us to define the standard length function $\ell$. This function is fairly well understood and of great interest as it provides relations between the algebraic structure of reflection groups and their underlying geometry. It is used, among others, as an organizing principle to prove that any finite Euclidean reflection group is a Coxeter group. This function is also involved in a wide variety of   algebraic and combinatorial subjects such as Hecke algebras, Bruhat order, Kazhdan-Lusztig polymonials, coset representatives, descent algebras, inversion sets, Garside shadows, Artin-Tits monoids etc.

Another useful function obtained from $\ell$ and related to the twisted orders on $W$ is the twisted length function $\ell_A$ introduced in \cite{Shelling2}, where $A \in \mathcal{P}(T)$ and $T$ is the set of reflections of $W$.  A reflection order of $W$ is a  total order on $T$ satisfying a local condition on any dihedral reflection subgroup of $W$ (see \cite{Shelling1}, Definition 2.1). Initial sections of reflection orders, which are certain subsets $ A\subset T$, lead to define partial pre-orders $\leq_{A} $ on $W$ that are similar to Bruhat order. The twisted length function plays an important role to determine which are the subsets $A\subset T$ such that $\leq_A$ is a partial order \cite{Shelling2, edgar2007sets}. The collection of subsets of $T$ satisfying this property is exactly the set of biclosed subsets of $T$, which is denoted by $\mathcal{B}(T)$ (we recall this notion in Section \ref{general back}).

In the context of affine Weyl groups, length function has also been well studied. In particular Jian Yi Shi gave a geometrical interpretation of $\ell$ as a sum of absolute values of integers subject to certain conditions \cite{JYS1} (we recall this material in Section \ref{shi para}). These integers are called Shi coefficients. 
In private communications Matthew Dyer conjectured, in the setting of affine Weyl groups, that the length $\ell_A$ (with $A \in \mathcal{B}(T)$) should be related to the Shi coefficients. 

This note contains two results: Proposition \ref{prop} and Theorem \ref{theo gen}. These results provide geometric tools in order to determine the twisted length function. In particular, Proposition \ref{prop} expresses the affine twisted length function $\ell_A$ in terms of Shi coefficients when $A$ is a very specific subset of $T$. For this specific set $A$, the order $\leq_{A}$ is essentially Lusztig's alcoves order introduced in \cite{Lus} Sections 1.4, 1.5. Theorem \ref{theo gen} expresses $\ell_A$ for any $A \in \mathcal{B}(T)$ in terms of Shi coefficients. The formula in this theorem is given by means of a sum and difference of four terms that are introduced in Definition \ref{def}.

The organisation of this  paper is as follows. Section \ref{general back} recalls some general material about Coxeter groups, twisted order and twisted length function. Sections \ref{affine Weyl groups}, \ref{shi para} recall basic definitions about affine Weyl groups and particularly Section \ref{shi para}  explains the notion of Shi coefficients. Section \ref{affine root} gives a theorem (due to M. Dyer) used later on to prove Theorem \ref{theo gen}. Finally, in Section  \ref{results} we give and prove our two statements.
 
 \newpage

\subsection{General background}\label{general back} Let $(W,S)$ be a Coxeter system with standard length function $\ell$, root system $\Psi$ and positive root system $\Psi^+$. The length function satisfies: for all $w \in W$
\begin{align}\label{length function}
\ell(w) = \ell(w^{-1}).
\end{align}

 Let $T = \{wsw^{-1}~|~s \in S,~w \in W\}$ be the set of reflections of $W$. The set $\Psi^+$  is naturally in bijection with $T$ and by denoting $\chi$ this bijection, we set $s_{\alpha}:=\chi(\alpha)$ to be the corresponding reflection of  $\alpha \in \Psi^+$.

Let $\mathcal{P}(T)$ be the power set of $T$. This set  is an abelian group under symmetric difference: $A+B := (A \setminus B) \cup (B \setminus A)$ and $W$ acts on $\mathcal{P}(T)$ by conjugation.

For $w \in W$ we denote $N(w) = \{t \in T~|~\ell(tw) < \ell(w)\}$. The cardinality of $N(w)$ is equal to $\ell(w)$. The map $N : W \rightarrow \mathcal{P}(T)$ is a reflection cocycle, that is $ N(xy) = N(y) + xN(y)x^{-1}$ for all $x,y \in W$ and $N(s) =\{s\}$ for all $s \in S$.

Twisting the conjugation action on $\mathcal{P}(T)$ by $N$ we obtain another action, called the twisted conjugation, and defined by
$$
\begin{array}{ccc}
W \times \mathcal{P}(T) & \longrightarrow & \mathcal{P}(T) \\
     (w, A) & \longmapsto & w\cdot A:= N(w) + wAw^{-1}.
\end{array}
$$

Let $A \in \mathcal{P}(T)$. The twisted length function $\ell_A$ on $W$ is defined by
 $$
\ell_A(w):= \ell(w) -2|N(w^{-1}) \cap A|.
$$ 
For $x,y \in W$ the function $\ell_A$ has the property (which is proved in \cite{Shelling2}, Proposition 1.1\footnote{In this proposition M. Dyer took a specific $A$, but the formula is still valid for all $A \in \mathcal{P}(T)$ and the proof is exactly the same.})
\begin{align}\label{length}
\ell_{A}(xy) = \ell_{A}(y) +\ell_{y\cdot A}(x).
\end{align}

Let $X \subset \Psi^+$. We denote by $\text{cone}_{\Psi}(X) := \text{cone}(X) \cap \Psi$. We say that $X$ is closed if $\text{cone}_{\Psi}(\{\alpha,\beta\}) \subset X$ for all $\alpha, \beta \in X$. We also say that $X$ is biclosed if $X$ and $\Psi^+\setminus X$ are both closed. A set $A \in \mathcal{P}(T)$ is called closed (resp. biclosed) if $\chi^{-1}(A)$ is closed (resp. biclosed). We denote by $\mathcal{B}(T)$ the set of biclosed subsets of $T$.
Using the cocycle property of $N$ and Proposition 2.11 of \cite{ISWO} it is easy to see that the set $\mathcal{B}_f(T)$ of finite biclosed subsets of $T$ is stable under the $N$-twisted conjugation. The more general case is proved in \cite{dyer2019weak} Lemma 4.1

Let $A \in \mathcal{P}(T)$. Define now a pre-order $\leq_A$ on $\mathcal{P}(T)$ by: $v \leq_A w$ iff there exist $t_1,\dots, t_p \in T$ such that $v=t_p \cdots t_1w$ and $t_i \in t_i\cdots t_1wA$ for $i=1,\dots, p$.  In particular $\leq_{\emptyset}$ is the usual Bruhat order and $\leq_{T}$ is the reverse Bruhat order. The pre-order $\leq_A$ have been strongly investigated in recent decades \cite{Shelling1, Shelling2, dyer2019weak, edgar2007sets} and used, among others, to prove the existence of Kazhdan-Lusztig polynomials and to  study the structure constants for the generic Hecke algebra \cite{Shelling2}. 

There exists a characterisation of subsets $A \in \mathcal{P}(T$) such that $\leq_{A}$ is a partial order. M. Dyer showed that if $A \in \mathcal{B}(T)$ then $\leq_{A}$ is a partial order \cite{Shelling2}. The reverse direction was proved by T. Edgar \cite{edgar2007sets}.

For $A \in \mathcal{B}(T)$, the length $\ell_A$ can be used to characterize the partial order $\leq_A$: $v \leq_A w$ iff there exist $t_1,\dots, t_p \in T$ such that $w=t_p \cdots t_1v$ and $	\ell_A(t_{i-1}\cdots t_1v) \leq \ell_A(t_i\cdots t_1v)$ for $i=1,\dots, p$.

\subsection{Affine Weyl groups}\label{affine Weyl groups}

Let $V$ be a Euclidean space with inner product $\langle -, -\rangle$. We denote $||x|| = \sqrt{\langle x,x \rangle} $ for $x \in V$. Let $\Phi$ be an irreducible crystallographic root system in $V$ with simple system $\Delta:=\{\alpha_1,~\dots, \alpha_n\}$.  From now on, when we will say \say{root system} it will always mean irreducible  crystallographic root system. We denote by $\mathbb{Z}\Phi$ the $\mathbb{Z}$-lattice generated by $\Phi$ and we identify $\mathbb{Z}\Phi$ with the group of its associated translations.

Let $W$ be the \emph{Weyl group} associated to $\mathbb{Z}\Phi$, that is the maximal  reflection subgroup of $O(V)$ admitting $\mathbb{Z}\Phi$ as a $W$-equivariant lattice, or equivalently the group generated by the reflections associated to $\Delta$.

Due to the classification of irreducible crystallographic root systems, we know that there are at most two possible root lengths in $\Phi$. We call short root the shorter ones. 
Let $\alpha \in \Phi$ such that  $\alpha = a_1\alpha_1 + \cdots + a_n\alpha_n$ with $a_i \in \mathbb{Z}$. The height of $\alpha$ (with respect to $\Delta$) is defined by the number $h(\alpha) = a_1 + \cdots+ a_n$. Height provides a pre-order on $\Phi^+$ defined by $\alpha \leq \beta$ if and only if $h(\alpha) \leq h(\beta)$. We denote by $-\alpha_0$ the \emph{highest short root} of $\Phi$.

For $\alpha \in \Phi$ we write $\alpha^{\vee}:= \frac{2\alpha}{\langle \alpha, \alpha \rangle}$. Let $k \in \mathbb{Z}$. Define the affine reflection $s_{\alpha,k} \in \text {Aff}(V)$ by  $s_{\alpha,k}(x)= x-(\alpha^{\vee}\langle \alpha, x \rangle-k)\alpha.$ The affine Weyl group associated to $\Phi$, denoted $W_a$,  is defined by
$$
W_a = \langle s_{\alpha,k}~|~\alpha \in \Phi, ~k \in \mathbb{Z \rangle}.
$$

It is known that  $W_a \simeq  \mathbb{Z}\Phi\rtimes W $ (see for example \cite{Hum}, Ch 4). In particular any element $w \in W_a$ decomposes as $w=\tau_x\overline{w}$ where $\tau_x$ is the translation corresponding to $x \in \mathbb{Z}\Phi$ and $\overline{w} \in W$ is called the finite part of $w$. The pair $(W_a, S_a)$ is a Coxeter system where $ S_a := \{s_{\alpha_1},\dots,s_{\alpha_n}\} \cup \{s_{-\alpha_0,1}\}$. 

\subsection{Shi parameterization}\label{shi para}

 For any $\alpha \in \Phi$, any $k \in \mathbb{Z}$ and any $m \in \mathbb{R}$, we define the hyperplanes 
$$
H_{\alpha,k} =  \{ x \in V~|~ \langle x, \alpha^{\vee} \rangle = k\},
$$
\noindent and the strips
\begin{align*}
H_{\alpha,k}^m & = \{x \in V~|~k < \langle x ,\alpha^{\vee} \rangle < k+m \}.
\end{align*}

We denote by $\mathcal{H}$ the set of all the hyperplanes $H_{\alpha,k}$ with $\alpha \in \Phi^+$, $k \in \mathbb{Z}$. It is easy to see that $H_{-\alpha,k} = H_{\alpha,-k}$. Therefore we need only to consider the hyperplanes $H_{\alpha,k}$ with $\alpha \in \Phi^+$ and $k \in \mathbb{Z}$.

 The connected components of 
$$
 V ~\backslash \bigcup\limits_{\begin{subarray}{c}
 ~ ~\alpha \in \Phi^{+} \\ 
  k \in \mathbb{Z}
\end{subarray}}
H_{\alpha,k} 
$$
are called \emph{alcoves}. We denote by $\mathcal{A}$ the set of all the alcoves and $A_e$ the alcove defined as $A_e = \bigcap_{\alpha \in \Phi^+} H_{\alpha,0}^1$. $W_a$  acts on $\mathcal{A}$ and it turns out that this action is regular (see for example \cite{Hum}, Ch 4). Thus, there is a bijective correspondence between  $W_a$ and $\mathcal{A}$.  This bijection is defined by $w \mapsto A_w$ where $A_w := wA_e$. We call $A_w$ the corresponding alcove associated to $w$. 

Any alcove of $V$ can be written as an intersection of particular strips, that is there exists a $\Phi^+$-tuple of integers $(k(w,\alpha))_{\alpha \in \Phi^+}$ such that 
$$
A_w = \bigcap\limits_{\alpha \in \Phi^+}H_{\alpha, k(w,\alpha)}^1.
$$

The coefficient $k(w,\alpha)$ is called the Shi coefficient of $w$ in position $\alpha$ and the vector $(k(w,\alpha))_{\alpha \in \Phi^+}$ is called the Shi vector of $w$.
For any $w \in W_a$ and any $\alpha \in \Phi$ we use the conventions 
\begin{align}\label{conv2}
k(w,-\alpha) = -k(w,\alpha),
\end{align}
\begin{align}\label{conv1}
k(w,\alpha^{\vee}) = k(w,\alpha).
\end{align}

 In the setting of affine Weyl groups the length of any element $w \in W_a$ is easy to compute via the coefficients $k(w,\alpha)$. Indeed, thanks to Proposition 4.3 in \cite{JYS1} we have 
  \begin{align}\label{long}
 \ell(w) = \sum\limits_{\alpha \in \Phi^+} |k(w,\alpha)|.
 \end{align}

\subsection{Affine root system}\label{affine root}

Let $\widehat{V} = V \oplus \mathbb{R}\delta$ with $\delta$ an indeterminate. The inner product $\langle -,- \rangle$ has a unique extension to a symmetric bilinear form on $\widehat{V}$ which is positive semidefinite and has a radical equal to the subspace $\mathbb{R}\delta$. This extension is also denoted $\langle -,- \rangle$, and it turns out that the set of isotropic vectors associated to the form $\langle -,- \rangle$ is exactly $\mathbb{R}\delta$.

The root system of $W_a$ is denoted by $\Phi_a$. Using \cite{mDgL}  (Section 3.3 Definition 4 and Proposition 2) a concrete description of the affine (resp. positive) root system of $W_a$ is provided by:
\begin{align*}
\begin{split}
\Phi_a     &= \Phi^{\vee}+ \mathbb{Z}\delta, \\
\Phi_a^+ &= ((\Phi^{\vee})^+ +\mathbb{N}\delta) \sqcup ((\Phi^{\vee})^-+\mathbb{N}^*\delta).
\end{split}
\end{align*}

There is a natural bijection between the roots of $\Phi_a^+$ and the hyperplanes of $\mathcal{H}$. This bijective correspondence is given by
\begin{align}\label{map}
\alpha^{\vee} + k\delta \longmapsto H_{\alpha,-k}.
\end{align}
In particular the reflection $s_{\alpha,k}$ acting in $V$ can be thought of as the reflection $s_{\alpha^{\vee}-k\delta}$ acting in $\widehat{V}$.

Let $\theta \in \Phi^{\vee}$. We denote by 
$$
\widetilde{\theta} = \left\{
                          	\begin{array}{rl}
                          	  \theta + \mathbb{N}\delta    & \text{if}~~ \theta \in (\Phi^{\vee})^+\\
 						  \theta + \mathbb{N}^*\delta & \text{if}~~ \theta \in (\Phi^{\vee})^-. \\
					    \end{array}
					    \right.
$$

For $\Gamma \subset \Phi^{\vee}$ we also denote $\widetilde{\Gamma} = \bigcup\limits_{\theta \in \Gamma}\widetilde{\theta}$.

Let $\Delta_1, \Delta_2 \subset \Delta$. For $X, Y \subset V$ we denote $\text{span}_{X}(Y):= \text{span}(Y) \cap X$. Then, we denote by $\Phi^+_{1,2}$ the set
$$
\Phi^+_{1,2}:=[(\Phi^{\vee})^+ \setminus \text{span}_{\Phi^{\vee}}(\Delta_1)] \sqcup [\text{span}_{\Phi^{\vee}}(\Delta_2)]
$$

and by $A_{1,2}$ the set 
\begin{align*}
A_{1,2} & := \{s_{\varepsilon}~|~\varepsilon \in \widetilde{\Phi^+_{1,2}} \} \\
            &  = \{ s_{\alpha^{\vee} +k\delta}~|~\alpha^{\vee} \in \Phi^+_{1,2},~k \in \mathbb{N} ~\text{if}~\alpha \in \Phi^+~ \text{and}~k \in \mathbb{N}^*~\text{if} ~\alpha \in \Phi^-      \} \\
            &\simeq \{s_{\alpha,k}~|~\alpha \in \Phi^+ \cap  (\Phi^+_{1,2})^{\vee} ,~k \in \mathbb{Z}\}.
\end{align*}

\medskip

\begin{definition}\label{def}
Let $x, y \in W_a$. We define (using the convention (\ref{conv2}))
\begin{itemize}
\item[1)]$
r_{1,2}^-(x) := \sum\limits_{\begin{subarray}{c}
 ~ ~~\alpha \in \Phi^{+}_{1,2} \\ 
  k(x, \alpha) \leq 0 
\end{subarray}}
k(x,\alpha)
~~~
\text{~~and~~}
~~~
r_{1,2}^+(x) := \sum\limits_{\begin{subarray}{c}
 ~ ~~\alpha \in \Phi^{+}_{1,2} \\ 
  k(x, \alpha) > 0 
\end{subarray}}
k(x,\alpha).
$
\item[2)] $
R^-_{1,2}(x,y) := r_{1,2}^-(yx)-r_{1,2}^-(y) ~~~ \text{~~and~~}~~~ R^+_{1,2}(x,y) := r_{1,2}^+(yx)-r_{1,2}^+(y).
$
\end{itemize}
\end{definition}

\medskip

\begin{theorem}[\cite{MD}, Theorem 1.15]\label{theo dy}
The set $$\{ \Phi^+_{i,j}~|~\Delta_i, \Delta_j \subset \Delta,~ \langle \Delta_i, \Delta_j \rangle =0\}$$ is a complete set of $W_a$-orbit representatives for the twisted conjugation on $\mathcal{B}(T)$.
\end{theorem}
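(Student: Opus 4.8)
The plan is to classify biclosed subsets of $\Phi_a^+$ up to twisted conjugation by splitting each biclosed $A$ into a \emph{direction at infinity} and a bounded \emph{transient} part, and then normalising both through the decomposition $W_a \simeq \mathbb{Z}\Phi \rtimes W$. First I would record how the action moves things: writing $w = \tau_x\overline{w}$, the finite part $\overline{w}$ permutes the $\delta$-columns $\widetilde{\theta}$ (for $\theta \in \Phi^\vee$) according to the ordinary action of $W$ on $\Phi^\vee$, whereas a translation $\tau_x$ fixes each column setwise and only shifts the integer threshold inside it. Since $w\cdot A$ and $wAw^{-1}$ differ by the finite set $N(w)$, twisted conjugation agrees with ordinary conjugation up to a finite symmetric difference; in particular $w\cdot\emptyset = N(w)$, so the finite biclosed sets form a single orbit, represented by $\emptyset = \Phi^+_{\Delta,\emptyset}$.

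The technical heart is a structural lemma exploiting the strong (real) cone closure. I would show that for biclosed $A$ each column $\widetilde{\theta}$ meets $A$ in a finite or cofinite subset, so that the asymptotic set $A_\infty := \{\theta \in \Phi^\vee \mid \widetilde{\theta}\setminus A \text{ is finite}\}$ is well defined, is itself biclosed in $\Phi^\vee$, and such that $A$ differs from the biclosed set determined by $A_\infty$ by only finitely many roots. The mechanism is that a single root together with one from an opposite column forces, via $\text{cone}_{\Phi_a}$, entire tails of both columns, and this propagates membership along the grading. Proving this lemma — that affine biclosedness renders every column eventually constant with a biclosed limiting pattern — is the step I expect to be the main obstacle, and it is where the explicit shape of $\Phi_a^+$ and the two possible root lengths are genuinely used.

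Granting the lemma, completeness follows by normalisation. I would conjugate by a suitable $\overline{w}\in W$ to make $A_\infty$ dominant, so that the fully excluded directions form a standard parabolic span $\text{span}_{\Phi^\vee}(\Delta_i)$ and the fully included positive directions are $(\Phi^\vee)^+ \setminus \text{span}_{\Phi^\vee}(\Delta_i)$. A translation $\tau_x$ then shifts the thresholds of the finitely many not-yet-stabilised columns so that, after the accompanying cocycle correction $N(\tau_x)$, the transient part is absorbed and $A$ is carried exactly onto $\widetilde{\Phi^+_{i,j}}$, where $\Delta_j$ indexes the sub-root-system all of whose affine roots (both signs) lie in $A$.

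Finally I would identify the admissible data with the orthogonal pairs. Biclosedness of $\widetilde{\Phi^+_{i,j}}$ is equivalent to no root of $\Phi^\vee$ having nonzero components along both $\text{span}(\Delta_i)$ and $\text{span}(\Delta_j)$: such a bridging root would lie in the cone of one root from the all-included $\Delta_j$-block and one from the all-excluded $\Delta_i$-block, breaking closure of $A$ or of its complement. This is exactly the condition $\langle\Delta_i,\Delta_j\rangle = 0$, and the converse inclusion (that each such $\Phi^+_{i,j}$ is biclosed) is a direct check. Distinctness of the orbits then comes from $A_\infty$ being a complete invariant: its dominant representative recovers $\text{span}(\Delta_j)$ as the maximal subspace meeting $\Phi^\vee$ symmetrically inside $A$ and $\text{span}(\Delta_i)$ as the totally excluded part, hence the pair $(\Delta_i,\Delta_j)$ uniquely.
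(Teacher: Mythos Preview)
The paper does not contain a proof of this statement. Theorem~\ref{theo dy} is quoted from an external source (Dyer, cited as \cite{MD}, Theorem~1.15) and is used purely as a black box in the proof of Theorem~\ref{theo gen}; the only further remark the paper makes is that ``some explanations of this theorem can be found in the preliminaries of \cite{wang2020reduced}.'' There is therefore nothing in the present paper against which your attempt can be compared.

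As a standalone sketch, your strategy---extracting an asymptotic direction set $A_\infty\subset\Phi^\vee$ from the column behaviour, normalising it by the finite Weyl group, and then absorbing the finite residue by a translation via the cocycle $N$---is the natural shape such a classification argument takes. Two points would need tightening before it becomes a proof. First, you assert that $A_\infty$ is biclosed ``in $\Phi^\vee$'', but the notion of biclosed in the paper is defined only for subsets of the \emph{positive} system; you would need to say precisely what structure $A_\infty$ carries (it should be that $A_\infty\cap(\Phi^\vee)^+$ is biclosed in $(\Phi^\vee)^+$ and that $A_\infty$ is stable under a suitable symmetry, which is what eventually produces the pair $(\Delta_i,\Delta_j)$). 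Second, the claim that a translation $\tau_x$ can always be chosen so that the cocycle correction $N(\tau_x)$ exactly cancels the transient part is the delicate step: the transient discrepancies in different columns are not independent, and one must check that a single lattice vector $x$ realises all the required threshold shifts simultaneously while keeping the result biclosed. This is where the actual work in Dyer's argument lies, and your outline acknowledges but does not resolve it.
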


Some explanations of this theorem can be found  in the preliminaries of \cite{wang2020reduced}.

\medskip

\section{Main results}\label{results}

The goal of this section is to give formulas about the affine twisted length function $\ell_A$ for specific $A \in \mathcal{P}(T)$. Proposition \ref{prop} expresses the length function $\ell_{A}$ with $A=\widetilde{(\Phi^{\vee})^+}$.  Finally, Theorem \ref{theo gen}  expresses the length $\ell_B$, for $B$ a biclosed subset of $T$, using the Shi coefficients.

\begin{proposition}\label{prop}
Let $A=\widetilde{(\Phi^{\vee})^+}$. Then $\ell_A(w) = \sum\limits_{\alpha \in \Phi^+}k(w^{-1},\alpha)$.
\end{proposition}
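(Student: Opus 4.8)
The plan is to unwind the definition $\ell_A(w) = \ell(w) - 2|N(w^{-1}) \cap A|$ and evaluate each piece through the Shi coefficients of $w^{-1}$. For the first term I would combine $\ell(w) = \ell(w^{-1})$ from (\ref{length function}) with the length formula (\ref{long}) applied to $w^{-1}$, giving $\ell(w) = \sum_{\alpha \in \Phi^+} |k(w^{-1},\alpha)|$. The whole content of the proposition then lies in computing the correction term $|N(w^{-1}) \cap A|$.

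The key geometric step is to read both $N(w^{-1})$ and $A$ as sets of hyperplanes. On one hand, under the bijection between $T$ and $\mathcal{H}$ (equivalently $\Phi_a^+$), the set $N(w^{-1})$ is exactly the collection of reflections whose hyperplane separates the fundamental alcove $A_e$ from $A_{w^{-1}} = w^{-1}A_e$; this is the standard description of the inversion set coming from the regular action of $W_a$ on $\mathcal{A}$, and it is consistent with $|N(w^{-1})| = \ell(w^{-1})$. On the other hand, since $\widetilde{\theta} = \theta + \mathbb{N}\delta$ for $\theta \in (\Phi^{\vee})^+$, the set $\chi^{-1}(A) = (\Phi^{\vee})^+ + \mathbb{N}\delta$ corresponds, through the identification (\ref{map}) $\alpha^{\vee} + k\delta \mapsto H_{\alpha,-k}$, precisely to the hyperplanes $H_{\alpha,j}$ with $\alpha \in \Phi^+$ and $j \leq 0$. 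Hence $N(w^{-1}) \cap A$ is the set of hyperplanes $H_{\alpha,j}$ with $j \leq 0$ that separate $A_e$ from $A_{w^{-1}}$.

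Next I would count these hyperplanes root by root using the Shi parameterization. For a fixed $\alpha \in \Phi^+$, the inclusion $A_{w^{-1}} \subset H_{\alpha, k(w^{-1},\alpha)}^1$ together with $A_e \subset H_{\alpha,0}^1$ shows that the separating hyperplanes $H_{\alpha,j}$ are those with $j$ strictly between the $\alpha^{\vee}$-values on $A_e$ and on $A_{w^{-1}}$. A short case analysis then gives that the number of such $j$ with $j \leq 0$ equals $|k(w^{-1},\alpha)|$ when $k(w^{-1},\alpha) < 0$ and equals $0$ when $k(w^{-1},\alpha) \geq 0$. Summing over $\alpha$ yields
$$
|N(w^{-1}) \cap A| = - \sum\limits_{\begin{subarray}{c} \alpha \in \Phi^+ \\ k(w^{-1},\alpha) < 0 \end{subarray}} k(w^{-1},\alpha).
$$

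Finally I would assemble the pieces. Writing $\sum_{\alpha}|k(w^{-1},\alpha)|$ as the sum of its positive and negative parts and substituting the count above into $\ell_A(w) = \ell(w) - 2|N(w^{-1})\cap A|$, the negative part cancels and one is left with $\ell_A(w) = \sum_{\alpha \in \Phi^+} k(w^{-1},\alpha)$, as claimed. I expect the main obstacle to be the bookkeeping in the geometric identification --- getting the sign conventions in (\ref{conv2}) and in the bijection (\ref{map}) to line up so that $A$ really corresponds to the hyperplanes with $j \leq 0$ and not those with $j \geq 0$ --- rather than the final arithmetic, which is routine. A low-rank verification (for instance in type $\widetilde{A}_1$) is worth running to fix the orientation.
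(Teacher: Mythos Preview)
Your proposal is correct and follows essentially the same approach as the paper: identify $A$ via the bijection~(\ref{map}) with the hyperplanes $H_{\alpha,j}$, $\alpha\in\Phi^+$, $j\le 0$, count $|N(w^{-1})\cap A|$ root by root using the Shi coefficients of $w^{-1}$, and finish with the elementary identity $|k|+2\min(k,0)=k$. The only difference is cosmetic: the paper invokes Shi's relation $k(w^{-1},\alpha)=-k(w,\overline w(\alpha))$ and routes the arithmetic through $k(w,\overline w(\alpha))$ before converting back, whereas you stay with $k(w^{-1},\alpha)$ throughout, which is shorter and avoids that detour entirely.
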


\begin{proof}
First of all, Proposition 3.4 of \cite{JYS1} tells us that  $\forall w \in W_a$ and $\forall \alpha \in \Phi$ we have 
\begin{align}\label{-1}
k(w^{-1}, \alpha) = -k(w,\overline{w}(\alpha))
\end{align}
 where $\overline{w}$ is the finite part of $w$. It is well known that for each $y \in W_a$, the hyperplanes corresponding to the reflections of $N(y)$ are exactly the hyperplanes between $A_y$ and $A_e$.
Via the correspondence (\ref{map}), the sets $\widetilde{(\Phi^{\vee})^+}$ and $\{H_{\alpha,k}~|~\alpha \in \Phi^+,~k \leq 0 \}$ are in bijection. Through this  identification, $N(w^{-1}) \cap A$ is the set of hyperplanes $H_{\alpha,k}$ such that $k \leq 0$ and $H_{\alpha,k}$ is between $A_{w^{-1}}$ and $A_e$. Furthermore, for a root $\alpha \in \Phi^+$ fixed, the number of $\alpha$-hyperplanes between $A_{w^{-1}}$ and $A_e$ is exactly $|k(w^{-1},\alpha)|$. 
Thus we have the formula
\begin{align}\label{ident}
|N(w^{-1}) \cap A|= \sum\limits_{\begin{subarray}{c}
 ~ ~~\alpha \in \Phi^{+} \\ 
  k(w^{-1},\alpha) \leq 0 
\end{subarray}}
|k(w^{-1},\alpha)|.
\end{align}

Then, the following computation gives the answer
\begin{align*}
\ell_{A}(w) & = \ell(w) - 2|N(w^{-1}) \cap A|  \stackrel{(\ref{length function})}{=} \ell(w^{-1}) - 2|N(w^{-1}) \cap A| \\ 
                  & \stackrel{(\ref{long})}{=} \sum\limits_{\alpha \in \Phi^+} |k(w^{-1},\alpha)|- 2|N(w^{-1}) \cap A| \\
                  & \stackrel{(\ref{-1}, \ref{ident})}{=} \sum\limits_{\alpha \in \Phi^+} |k(w,\overline{w}(\alpha))| - 2 \sum         	   \limits_{\begin{subarray}{c}
 ~ ~~\alpha \in \Phi^{+} \\ 
  k(w^{-1},\alpha) \leq 0 
\end{subarray}}
|k(w^{-1},\alpha)|  \\
             & = \sum\limits_{\alpha \in \Phi^+} |k(w,\overline{w}(\alpha))| + 2\sum         	   \limits_{\begin{subarray}{c}
 ~ ~~\alpha \in \Phi^{+} \\ 
  k(w^{-1},\alpha) \leq 0 
\end{subarray}}
k(w^{-1},\alpha) \\
& \stackrel{(\ref{-1})}{=}\sum\limits_{\alpha \in \Phi^+} |k(w,\overline{w}(\alpha))| - 2\sum         	   \limits_{\begin{subarray}{c}
 ~ ~~\alpha \in \Phi^{+} \\ 
  k(w, \overline{w}(\alpha)) \geq 0 
\end{subarray}} 
|k(w,\overline{w}(\alpha))| \\
& = \sum\limits_{\begin{subarray}{c}
 ~ ~~\alpha \in \Phi^{+} \\ 
  k(w, \overline{w}(\alpha)) \geq 0 
\end{subarray}}
|k(w,\overline{w}(\alpha))| + \sum\limits_{\begin{subarray}{c}
 ~ ~~\alpha \in \Phi^{+} \\ 
  k(w, \overline{w}(\alpha)) < 0 
\end{subarray}}
|k(w,\overline{w}(\alpha))| -2\sum\limits_{\begin{subarray}{c}
 ~ ~~\alpha \in \Phi^{+} \\ 
  k(w, \overline{w}(\alpha)) \geq 0 
\end{subarray}}
k(w,\overline{w}(\alpha)) \\
&= \sum\limits_{\begin{subarray}{c}
 ~ ~~\alpha \in \Phi^{+} \\ 
  k(w, \overline{w}(\alpha)) \geq 0 
\end{subarray}}
k(w,\overline{w}(\alpha)) - \sum\limits_{\begin{subarray}{c}
 ~ ~~\alpha \in \Phi^{+} \\ 
  k(w, \overline{w}(\alpha)) < 0 
\end{subarray}}
k(w,\overline{w}(\alpha)) -2\sum\limits_{\begin{subarray}{c}
 ~ ~~\alpha \in \Phi^{+} \\ 
  k(w, \overline{w}(\alpha)) \geq 0 
\end{subarray}}
k(w,\overline{w}(\alpha)) \\ 
& = - \sum\limits_{\begin{subarray}{c}
 ~ ~~\alpha \in \Phi^{+} \\ 
  k(w, \overline{w}(\alpha)) < 0 
\end{subarray}}
k(w,\overline{w}(\alpha)) -\sum\limits_{\begin{subarray}{c}
 ~ ~~\alpha \in \Phi^{+} \\ 
  k(w, \overline{w}(\alpha)) \geq 0 
\end{subarray}}
k(w,\overline{w}(\alpha)) \\ 
& = -\sum\limits_{\alpha \in \Phi^+} k(w,\overline{w}(\alpha)) \\
& \stackrel{(\ref{-1})}{=} \sum\limits_{\alpha \in \Phi^+} k(w^{-1},\alpha).
\end{align*}
\end{proof}

\begin{example} We give in Figure \ref{image} an element $w$ where we can see the formula of Proposition \ref{prop}. Indeed $\ell_A(w)=4-2|N(w^{-1})\cap A|= 4-2\cdot 4 = -4$.

\vspace*{\stretch{1}}
\begin{figure}[h!]
\includegraphics[scale=0.58]{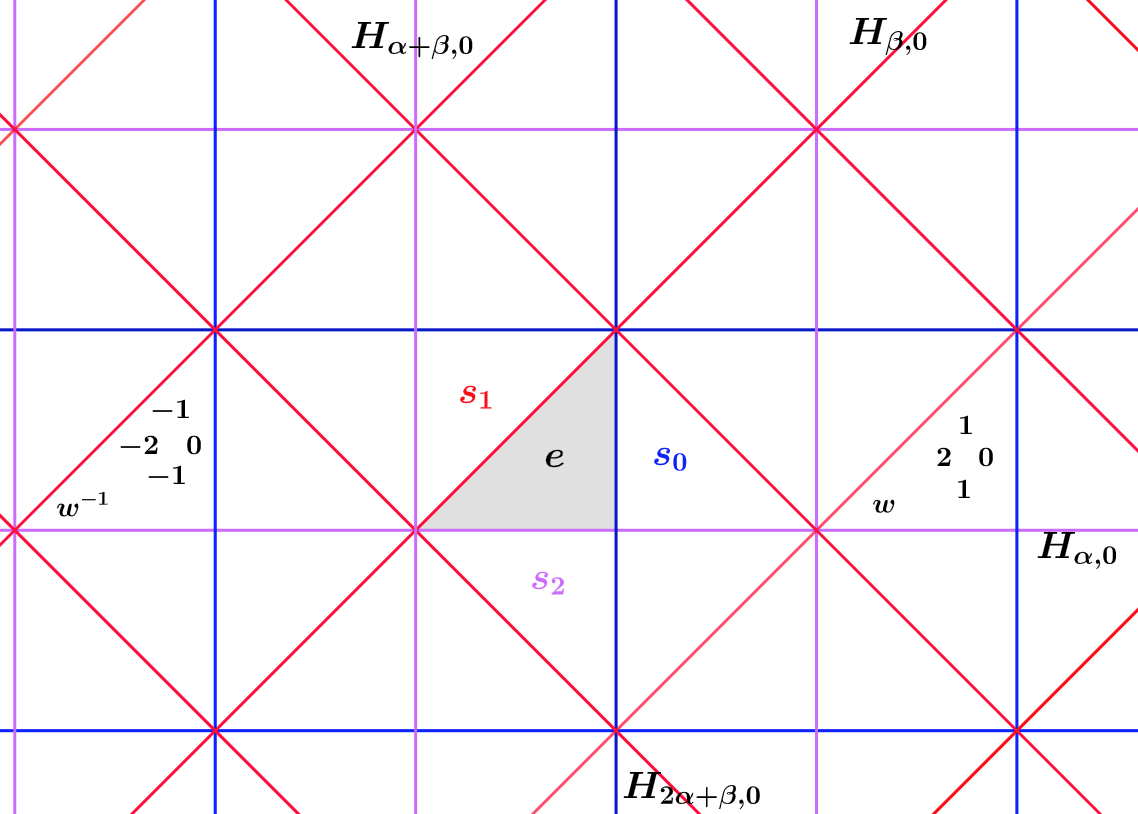} 
\caption{Application of Proposition \ref{prop} for $\Phi = B_2$.}
\label{image}
\end{figure}
\vspace*{\stretch{1}}
\end{example}

\begin{remark}
We also want to point out the following fact: in the formula of the standard length of $W_a$ (\ref{long}), the absolute value is involved, which most of the times makes the computations difficult. In Proposition \ref{propo long simple}, the computations of $\ell_A$ in terms of Shi coefficients make the absolute value disappears. 
This phenomenon is of interest in order to understand the set $N^1_A(w) = \{s_{\alpha,k} \in W_a~|~\ell_A(s_{\alpha,k}w) = \ell_A(w)-1\}$. It is likely that for any $B \in \mathcal{B}(T)$, a better expression of $\ell_B$ would  provide a better comprehension of $N^1_B(w)$.

 When $A=\emptyset$, the set $N^1(w):=N^1_A(w)$ is strongly related to the automatic and geometric structure of Coxeter groups, namely to a conjecture due to M. Dyer and C. Hohlweg relating low elements and Shi regions \cite{SRLE}.
\end{remark}

\begin{lemma} \label{propo long simple}
Let $x \in W_a$ and $\Delta_1, \Delta_2 \subset \Delta$. Then we have the formula
$$\ell_{A_{1,2}}(x) =  \ell(x) +2r_{1,2}^-(x^{-1}) - 2r_{1,2}^+(x^{-1}).$$
\end{lemma}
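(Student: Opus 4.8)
The plan is to imitate the proof of Proposition \ref{prop}, replacing the set $\widetilde{(\Phi^{\vee})^+}$ by the more structured set $A_{1,2}$ and keeping careful track of which $\alpha$-hyperplanes occur. First I would unfold the definition $\ell_{A_{1,2}}(x) = \ell(x) - 2|N(x^{-1}) \cap A_{1,2}|$ and, using (\ref{length function}) and (\ref{long}), rewrite $\ell(x) = \ell(x^{-1}) = \sum_{\alpha \in \Phi^+}|k(x^{-1},\alpha)|$. The whole problem then reduces to expressing the cardinality $|N(x^{-1}) \cap A_{1,2}|$ in terms of the Shi coefficients $k(x^{-1},\alpha)$.

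The second step is to translate $A_{1,2}$ into a set of hyperplanes through the bijection (\ref{map}), exactly as is done for $\widetilde{(\Phi^\vee)^+}$ in Proposition \ref{prop}. Unwinding the definition of $\widetilde{\Phi^+_{1,2}}$, a coroot $\alpha^\vee \in (\Phi^\vee)^+ \setminus \text{span}_{\Phi^\vee}(\Delta_1)$ contributes the affine roots $\alpha^\vee + \mathbb{N}\delta$, hence the hyperplanes $H_{\alpha,m}$ with $m \leq 0$. For $\alpha \in \Phi^+$ with $\alpha^\vee \in \text{span}_{\Phi^\vee}(\Delta_2)$, both $\alpha^\vee$ and $-\alpha^\vee$ lie in $\text{span}_{\Phi^\vee}(\Delta_2)$, so $A_{1,2}$ receives $\alpha^\vee + \mathbb{N}\delta$ and $-\alpha^\vee + \mathbb{N}^*\delta$; via (\ref{map}) and $H_{-\alpha,k} = H_{\alpha,-k}$ these give the full family $H_{\alpha,m}$, $m \in \mathbb{Z}$. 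Thus, under (\ref{map}), $A_{1,2}$ corresponds to the hyperplanes $H_{\alpha,m}$ with $m \leq 0$ for $\alpha^\vee \in (\Phi^\vee)^+ \setminus \text{span}_{\Phi^\vee}(\Delta_1)$, together with all $H_{\alpha,m}$, $m \in \mathbb{Z}$, for $\alpha \in \Phi^+$ with $\alpha^\vee \in \text{span}_{\Phi^\vee}(\Delta_2)$.

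Third, I would count for each fixed $\alpha \in \Phi^+$ how many of these hyperplanes lie between $A_{x^{-1}}$ and $A_e$. By the fact recalled in the proof of Proposition \ref{prop}, there are exactly $|k(x^{-1},\alpha)|$ such $\alpha$-hyperplanes altogether, and they split into $\max(-k(x^{-1},\alpha),0)$ of level $m \leq 0$ and $\max(k(x^{-1},\alpha),0)$ of level $m > 0$. Summing over $\alpha$ according to the description of $A_{1,2}$ above, and passing between roots and coroots with (\ref{conv1}) and (\ref{conv2}), turns $|N(x^{-1}) \cap A_{1,2}|$ into a sum of signed Shi coefficients indexed by $\Phi^+_{1,2}$. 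Isolating the contributions of non-positive coefficients and of positive coefficients should reproduce $r^-_{1,2}(x^{-1})$ and $r^+_{1,2}(x^{-1})$ of Definition \ref{def}; substituting back into $\ell_{A_{1,2}}(x) = \ell(x) - 2|N(x^{-1}) \cap A_{1,2}|$ then gives the stated formula, with the absolute values cancelling as in Proposition \ref{prop}.

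The delicate point, and the step I expect to be the main obstacle, is precisely this last regrouping. Because every coroot in $\text{span}_{\Phi^\vee}(\Delta_2)$ enters $\Phi^+_{1,2}$ with both signs via (\ref{conv2}), the corresponding roots $\alpha$ contribute $\alpha$-hyperplanes at all levels $m \in \mathbb{Z}$; one must check that the levels $m > 0$ assemble exactly into $r^+_{1,2}(x^{-1})$ and that, together with the levels $m \leq 0$ coming from the whole of $\Phi^+_{1,2}$ (which should assemble into $r^-_{1,2}(x^{-1})$), the total combines with $\ell(x)=\sum_{\alpha}|k(x^{-1},\alpha)|$ so that every absolute value disappears. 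Matching the signs and the index sets correctly here is where the computation has to be carried out with care, and it is the natural place to verify the formula against the special case $\Delta_1=\Delta_2=\varnothing$, where $A_{1,2}=\widetilde{(\Phi^\vee)^+}$ recovers Proposition \ref{prop}.
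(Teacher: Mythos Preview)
Your proposal is correct and follows essentially the same route as the paper: unfold $\ell_{A_{1,2}}(x)=\ell(x)-2|N(x^{-1})\cap A_{1,2}|$, identify $A_{1,2}$ with a set of hyperplanes via (\ref{map}), and count those separating $A_e$ from $A_{x^{-1}}$ in terms of the Shi coefficients. The paper is simply more condensed: it does not rewrite $\ell(x)$ via (\ref{long}) (unnecessary here, since $\ell(x)$ survives in the final formula), and instead of your two-case analysis it records directly that $|N(x^{-1})\cap A_{1,2}|=\sum_{\alpha\in\Phi^+_{1,2}}|k(x^{-1},\alpha)|$ and then splits this sum by the sign of $k(x^{-1},\alpha)$, which by Definition~\ref{def} is exactly $-r^-_{1,2}(x^{-1})+r^+_{1,2}(x^{-1})$; so the ``delicate regrouping'' you flag is immediate from the definitions.
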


\begin{proof}
By definition $\ell_{A_{1,2}}(x) =   \ell(x) - 2|N(x^{-1}) \cap A_{1,2}|$. Using the bijection (\ref{map}), we see that $A_{1,2}$ is in bijection with $\{H_{\alpha,k}~|~\alpha \in \Phi_{1,2}^+,~ k \leq 0 \} \sqcup  \{H_{\alpha,k}~|~\alpha \in \Phi_{1,2}^+, ~k > 0 \}$. It follows that 
\begin{align*}
|N(x^{-1}) \cap A_{1,2}| & = \sum\limits_{\begin{subarray}{c}
 ~ ~~\alpha \in \Phi^{+}_{1,2} \\ 
  k(x^{-1}, \alpha) \leq 0 
\end{subarray}}
|k(x^{-1},\alpha)| + \sum\limits_{\begin{subarray}{c}
 ~ ~~\alpha \in \Phi^{+}_{1,2} \\ 
  k(x^{-1}, \alpha) > 0 
\end{subarray}}
|k(x^{-1},\alpha)| \\
& = -\sum\limits_{\begin{subarray}{c}
 ~ ~~\alpha \in \Phi^{+}_{1,2} \\ 
  k(x^{-1}, \alpha) \leq 0 
\end{subarray}}
k(x^{-1},\alpha) + \sum\limits_{\begin{subarray}{c}
 ~ ~~\alpha \in \Phi^{+}_{1,2} \\ 
  k(x^{-1}, \alpha) > 0 
\end{subarray}}
k(x^{-1},\alpha) \\
& = -r_{1,2}^-(x^{-1}) + r_{1,2}^+(x^{-1}).
\end{align*}

The result follows.
\end{proof}

\begin{theorem}\label{theo gen}
Let $B \in \mathcal{B}(T)$. Then there exist $\Delta_1, \Delta_2 \subset \Delta$ (two pairwise orthogonal sets) and $y \in W_a$ such that for all $x \in W_a$ we have
 $$
 \ell_{B}(x) = \ell(xy) - \ell(y) + 2R_{1,2}^-(x^{-1},y^{-1}) -2R_{1,2}^+(x^{-1},y^{-1}).$$
\end{theorem}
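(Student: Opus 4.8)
The plan is to reduce the general biclosed $B$ to the orbit representatives $\Phi^+_{i,j}$ supplied by Theorem \ref{theo dy}, and then transport the computation of $\ell_{A_{1,2}}$ already carried out in Lemma \ref{propo long simple} along a group element using the cocycle-type identity \eqref{length}. First I would invoke Theorem \ref{theo dy}: since $B \in \mathcal{B}(T)$ and the finite biclosed sets (more generally all of $\mathcal{B}(T)$, by \cite{dyer2019weak} Lemma 4.1) are stable under $N$-twisted conjugation, there are pairwise orthogonal $\Delta_1, \Delta_2 \subset \Delta$ and an element $y \in W_a$ with $B = y^{-1}\cdot A_{1,2}$, where $A_{1,2} = \Phi^+_{1,2}$ under the identification $\chi$. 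This choice of $y$ is exactly the $y$ appearing in the statement.

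Next I would exploit property \eqref{length}, namely $\ell_A(uv) = \ell_A(v) + \ell_{v\cdot A}(u)$. Applying this with the twisting set $A_{1,2}$, the factorization $xy = x \cdot y$ (so $u = x$, $v = y$), and using that $y \cdot A_{1,2} = B$ by our choice of $y$, I obtain
\begin{align*}
\ell_{A_{1,2}}(xy) = \ell_{A_{1,2}}(y) + \ell_{B}(x).
\end{align*}
Solving for $\ell_B(x)$ gives $\ell_B(x) = \ell_{A_{1,2}}(xy) - \ell_{A_{1,2}}(y)$. The remaining task is purely computational: apply Lemma \ref{propo long simple} to each of the two terms on the right. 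For the first term this yields
\begin{align*}
\ell_{A_{1,2}}(xy) = \ell(xy) + 2r_{1,2}^-((xy)^{-1}) - 2r_{1,2}^+((xy)^{-1}),
\end{align*}
and for the second, $\ell_{A_{1,2}}(y) = \ell(y) + 2r_{1,2}^-(y^{-1}) - 2r_{1,2}^+(y^{-1})$. Here I would use $(xy)^{-1} = y^{-1}x^{-1}$, so that $r_{1,2}^{\pm}((xy)^{-1}) = r_{1,2}^{\pm}(y^{-1}x^{-1})$.

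Subtracting, the $\ell(\cdot)$ terms become $\ell(xy) - \ell(y)$, and the $r$-terms organize exactly into the differences defining $R^{\pm}_{1,2}$ in Definition \ref{def}, item 2). Indeed, setting the first argument to $x^{-1}$ and the second to $y^{-1}$ in that definition gives $R^-_{1,2}(x^{-1}, y^{-1}) = r^-_{1,2}(y^{-1}x^{-1}) - r^-_{1,2}(y^{-1})$, matching the collected coefficients, and similarly for $R^+$. This produces precisely
\begin{align*}
\ell_B(x) = \ell(xy) - \ell(y) + 2R^-_{1,2}(x^{-1}, y^{-1}) - 2R^+_{1,2}(x^{-1}, y^{-1}),
\end{align*}
as claimed. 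The main obstacle I anticipate is the bookkeeping in the second paragraph: one must verify that \eqref{length} is applied with the twisting set in the correct orbit position so that $y \cdot A_{1,2}$ really equals $B$ (equivalently, that $B$ and $A_{1,2}$ lie in the same twisted-conjugacy orbit with the stated $y$), and that the order of composition matches the convention $w \cdot A = N(w) + wAw^{-1}$. Once the orbit identity $B = y \cdot A_{1,2}$ is pinned down correctly, everything else is a direct substitution from Definition \ref{def} and Lemma \ref{propo long simple}.
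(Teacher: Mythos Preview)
Your proposal is correct and follows essentially the same route as the paper: invoke Theorem~\ref{theo dy} to write $B = y\cdot A_{1,2}$, use the cocycle identity \eqref{length} to get $\ell_B(x) = \ell_{A_{1,2}}(xy) - \ell_{A_{1,2}}(y)$, and then substitute Lemma~\ref{propo long simple} into both terms. The only blemish is the slip in your first paragraph where you write $B = y^{-1}\cdot A_{1,2}$ but then (correctly) use $B = y\cdot A_{1,2}$ thereafter; since the element coming from Theorem~\ref{theo dy} is only determined up to relabeling, simply define $y$ by $B = y\cdot A_{1,2}$ from the outset and the bookkeeping worry you flag at the end disappears.
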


\begin{proof}

By Theorem \ref{theo dy} we know that $B=y\cdot A_{1,2}$ for some $\Delta_1, \Delta_2 \subset \Delta$ and for some $y \in W_a$. By (\ref{length}) we know that $\ell_{A_{1,2}}(xy) = \ell_{A_{1,2}}(y) +\ell_{y\cdot A_{1,2}}(x)$. In particular we have $\ell_{B}(x) = \ell_{A_{1,2}}(xy) - \ell_{A_{1,2}}(y)$. Then, Lemma \ref{propo long simple} implies that
\begin{align*}
 \ell_{B}(x)  & =  \ell_{A_{1,2}}(xy) - \ell_{A_{1,2}}(y) \\
                 & = \big[ \ell(xy) + 2r_{1,2}^-((xy)^{-1}) - 2r_{1,2}^+((xy)^{-1}) \big]-\big[ \ell(y) +2r_{1,2}^-(y^{-1}) - 2r_{1,2}^+(y^{-1}) \big] \\
                 & = \big[ \ell(xy) + 2r_{1,2}^-(y^{-1}x^{-1}) - 2r_{1,2}^+((y^{-1}x^{-1}) \big]-\big[ \ell(y) +2r_{1,2}^-(y^{-1}) - 2r_{1,2}^+(y^{-1}) \big] \\
                 & = \ell(xy) - \ell(y) + 2\big[r_{1,2}^-(y^{-1}x^{-1})- r_{1,2}^-(y^{-1})\big] -2 \big[r_{1,2}^+(y^{-1}x^{-1})- r_{1,2}^+(y^{-1}) \big]    \\
                 & = \ell(xy) - \ell(y) + 2R_{1,2}^-(x^{-1},y^{-1}) -2R_{1,2}^+(x^{-1},y^{-1}).          
\end{align*}
\end{proof}

\medskip

\begin{remark}
In particular, if $xy$ is a reduced expression it follows that $\ell_{B}(x) = \ell(x) + 2R_{1,2}^-(x^{-1},y^{-1}) -2R_{1,2}^+(x^{-1},y^{-1})$. We also see  that the formula in Proposition \ref{propo long simple} can be obtained from Theorem \ref{theo gen}  with $y =e$.
\end{remark}

\textbf{Acknowledgements}. I am grateful to Matthew Dyer who suggested the idea investigated in this paper and who provided many useful references. I thank the referee for some useful suggestions concerning the presentation of this note.

\bibliographystyle{plain}
\bibliography{length_function.bib}

\end{document}